\newtheorem{sat}{Theorem}[section]		
\newtheorem{lem}[sat]{Lemma}
\newtheorem{kor}[sat]{Corollary}			
\newtheorem{prop}[sat]{Proposition}
\newtheorem*{defi*}{Definition}			
\newtheorem*{bei*}{Example}
\newtheorem*{sat*}{Theorem}				
\newtheorem*{kor*}{Corollary}
\newtheorem*{rmk*}{Remark}				
\newtheorem*{quest*}{Question}
\let\ssection=\section
\renewcommand{\section}{\setcounter{equation}{0}\ssection}
\newtheorem*{namedtheorem}{\theoremname}
\newcommand{\theoremname}{testing}
\theoremstyle{remark}
\newtheorem*{bem}{Remark}
\newtheorem*{namedtheoremr}{\theoremnamer}
\newcommand{\theoremnamer}{testing}
\newcommand{\BR}{\mathbb R}			
			\newcommand{\BQ}{\mathbb Q}
\newcommand{\BS}{\mathbb S}			\newcommand{\BZ}{\mathbb Z}
\newcommand{\D}{\partial}
\newcommand{\DD}{\nabla}
\newcommand{\comment}[1]{}
\newcommand{\fsubd}{\mathrel{{\scriptstyle\searrow}\kern-1ex^d\kern0.5ex}}
\newcommand{\bsubd}{\mathrel{{\scriptstyle\swarrow}\kern-1.6ex^d\kern0.8ex}}
\begin{document}

\title[]{A remark about critical sets in $\BR^3$}
\author{Juan Souto}
\address{IRMAR, Universit\'e de Rennes 1}
\email{juan.souto@univ-rennes1.fr}
\thanks{This material is based upon work supported by the National Science Foundation under Grant No. DMS-1440140 while the author was in residence at the Mathematical Sciences Research Institute in Berkeley, California, during the Fall 2016 semester.}
\begin{abstract}
We give a necessary condition for a closed subset of $\BR^3$ to be the set of critical points of some smooth function. In particular we obtain that for example neither the Whitehead continuum nor the p-adic solenoid are such a critical sets.
\end{abstract}
\maketitle

\section{}
It is classical, and otherwise easy to see, that every closed set $X\subset\BR^n$ is the vanishing set $\{f=0\}$ of some smooth function $f\in C^\infty(\BR^n)$. On the other hand, we are far from understanding which sets are critical. Here a set $X\subset\BR^n$ is {\em critical} if there is a smooth function $f\in C^\infty(\BR^n)$ with $X$ as its set of criticial points, that is $X=\{df=0\}$. In fact, while it is easy to determine if a submanifold $X\subset\BR^n$ is a critical set, more general critical sets are only fully understood if the dimension of the ambient space is very small, namely $1$ or $2$ (see \cite{Norton-Pugh}). 

In \cite{Grayson-Pugh}, the paper motivating our work, Grayson and Pugh prove that a wealth of rather wild closed subsets of $\BR^3$ are critical. They prove for example that Antoine's necklace (Theorem A) is critical. Actually, as they mention in page 26 of their paper, their argument can be modified to prove that every Cantor set in $\BR^3$ is also critical. The Denjoy continuum (or for that matter, any proper closed subset of a smooth connected subsurface of $\BR^3$) is also critical (Theorem 2.21). Also, any closed set $C\subset\BR^3$ with $\BR^3\setminus C\simeq\BS^2\times\BR$ is critical as well (Corollary 2.6). The image $\phi(C)$ of a critical set $C\subset\BR^3$ under a homeomorphism $\phi:\BR^3\to\BR^3$ is also critical (Theorem 2.5). And so on...

The aim of this note is not to construct even more critical sets, but to give an obstruction for a set to be critical. Our main result is the following:

\begin{sat}\label{sat1}
Let $X\subset\BR^3$ be a closed and connected set whose complement $\BR^3\setminus X$ is connected and satisfies $\dim_\BR H_1(\BR^3\setminus X;\BR)<\infty.$ If $X$ is critical, then $\BR^3\setminus X$ is tame.
\end{sat}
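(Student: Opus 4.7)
Pick a smooth $f \in C^\infty(\BR^3)$ realizing $X = \{df=0\}$. By Sard's theorem $f(X)$ has Lebesgue measure zero; since $X$ is connected, so is $f(X)$, and hence $f$ takes a single value on $X$. Subtracting a constant I assume $f \equiv 0$ on $X$. Consequently $0$ is the unique critical value of $f$, and the restriction $f|_M$ is a submersion, where $M := \BR^3\setminus X$; for every $c\neq 0$ the level set $\Sigma_c := f^{-1}(c)$ is therefore a smooth properly embedded $2$-submanifold of $\BR^3$ entirely contained in $M$.

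To prove $M$ is tame I would look for a compact $3$-submanifold with boundary $N\subset\BR^3$ of the form $N = \{|f|\geq\epsilon\}\cap\bar B$, with $\bar B$ a large closed ball and $\epsilon > 0$ suitably small, and try to show $M$ is diffeomorphic to the interior of $N$. This reduces to two sub-problems. \emph{(a) Level-set topology.} Show that for $\epsilon$ small enough $\Sigma_{\pm\epsilon}$ consist of finitely many compact components of bounded genus. Here the hypothesis $\dim_\BR H_1(M;\BR) < \infty$ must enter, presumably through a Mayer--Vietoris/Alexander-duality argument bounding the first Betti number of $\Sigma_{\pm\epsilon}$ in terms of that of $M$, together with an argument preventing non-compact components from escaping to infinity. \emph{(b) Collar structure.} Show that the neighbourhood $\{|f|<\epsilon\}\cap M$ of $X$ in $M$ is diffeomorphic to a collar on $\Sigma_\epsilon\sqcup\Sigma_{-\epsilon}$. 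The natural way to establish this is via the gradient-like vector field $v := \nabla f/|\nabla f|^2$ on $M$, which satisfies $v(f)\equiv 1$ and whose flow between two regular levels on the same side of $0$ produces the desired product diffeomorphism whenever it is complete.

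The main obstacle is controlling the level-set topology in (a) as $c\to 0$. It is a priori possible that $\Sigma_c$ has infinitely many components clustering on $X$, or components of arbitrarily large genus, and the passage from a finite bound on $H_1(M)$ to a uniform bound on $H_1(\Sigma_c)$ is not automatic because the surfaces $\Sigma_c$ degenerate onto the possibly wild set $X$. Closely related is the completeness of the flow of $v$ in (b): trajectories could in principle crash into $X$ or escape to infinity in finite time, and a separate argument is needed to rule this out. A final complication, present when $X$ is non-compact, is that the end of $\BR^3$ at infinity and the ends of $M$ approaching $X$ can interact non-trivially, requiring a simultaneous treatment.
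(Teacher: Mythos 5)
Your opening step (Sard's theorem forces $f\vert_X$ to be constant, so one may normalize $X\subset f^{-1}(0)$) is exactly the paper's Lemma \ref{lem-constant}, and your instinct that the hypothesis $\dim_\BR H_1(\BR^3\setminus X;\BR)<\infty$ must enter through a duality argument about level surfaces is also on target. But past that point the proposal is a plan with two unproved sub-problems, and these are not loose ends: they are the whole theorem. Sub-problem (b) --- that $\{0<\vert f\vert<\epsilon\}\cap M$ is a collar on $\Sigma_{\pm\epsilon}$ --- is essentially equivalent to the tameness of $M$ near the end at $X$, so reducing the theorem to (b) is circular; and the obstacles you candidly list (components of $\Sigma_c$ proliferating as $c\to 0$, the flow of $\DD f/\vert\DD f\vert^2$ degenerating as $\vert\DD f\vert\to 0$ at $X$) are genuine and are precisely where a direct product-structure argument breaks down. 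Nothing in the hypotheses gives uniform control of the regular levels $\Sigma_c$ as $c\to 0$, and no argument is offered for why some single $\epsilon$ should capture the end.

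The missing idea is that one should never try to build a product structure near $X$ at all: Tucker's theorem (Lemma \ref{lem-tucker} in the paper) reduces tameness of $\BR^3\setminus X$ to the purely group-theoretic statement that $\pi_1(N\setminus X)$ is finitely generated for every compact submanifold $N$ containing $X$ in its interior. This is the decisive 3-manifold input that absorbs all the wildness of $X$. For a fixed such $N$ (with $\D N$ perturbed so that $\DD f$ is nowhere orthogonal to $T_x\D N$ on $\D N\cap f^{-1}[-1,1]$), the paper cuts $N\setminus X$ along the \emph{zero} level $\Sigma=f^{-1}(0)\cap(N\setminus X)$ --- which is a smooth surface exactly because $X$ is the entire critical set --- rather than along nearby regular levels, so the degeneration you worried about never arises. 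The homological hypothesis enters once: Mayer--Vietoris gives $\dim_\BR H^1(N\setminus X;\BR)<\infty$, and if $\Sigma$ had infinitely many components, infinitely many of them adjacent to a common pair of complementary regions would yield infinitely many linearly independent Poincar\'e-dual classes, a contradiction. Each component $U$ of $(N\setminus X)\setminus\Sigma$ must meet $\D N$ (otherwise $f$ would have a critical point off $X$), and a boundary-adapted Morse lemma (Lemma \ref{lem-baby morse}) shows the inclusion $\overline{U\cap f^{-1}[T,\infty)}\hookrightarrow U$ is a homotopy equivalence onto $U$ from a \emph{compact} manifold, so $\pi_1(U)$ is finitely generated; Seifert--van Kampen (Lemma \ref{lem-svk}) then gives finite generation of $\pi_1(N\setminus X)$ and Tucker's theorem concludes. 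In short: Morse theory is used only to produce homotopy equivalences onto compact pieces away from $X$, and the passage from finitely generated fundamental groups to tameness --- the work your collar construction would have had to do by hand --- is delegated entirely to Tucker's theorem.
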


Recall that an open 3-manifold is {\em tame} if it is homeomorphic to the interior of a compact manifold with boundary. 

Maybe, the most prominent closed connected set $X\subset\BR^3$ with non-tame complement is the Whitehead continuum \cite{Rolfsen}. In fact, the complement of the Whitehead continuum is simply connected, which means that Theorem \ref{sat1} applies to it:

\begin{kor}
The Whitehead continuum is not critical.\qed
\end{kor}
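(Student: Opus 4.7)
The plan is to apply Theorem \ref{sat1} contrapositively. Writing $W$ for the Whitehead continuum, constructed as a nested intersection $W=\bigcap_{n\geq 1}T_n$ of solid tori with each $T_{n+1}$ embedded in $T_n$ in the Whitehead manner, I would verify four points: $W$ is closed and connected, $\BR^3\setminus W$ is connected, $\dim_\BR H_1(\BR^3\setminus W;\BR)<\infty$, and $\BR^3\setminus W$ is not tame. Given these, Theorem \ref{sat1} immediately forbids $W$ from being critical.

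The closedness and connectedness of $W$ are built into its description as a nested intersection of compact connected sets. A standard feature of the Whitehead construction is that $\BR^3\setminus W$ is connected and even simply connected---indeed contractible (see e.g.\ \cite{Rolfsen}); the mechanism is that every loop in $\BR^3\setminus W$ sits in some $\BR^3\setminus T_n$, and the Whitehead clasp is designed precisely so that the meridional generator of $\pi_1(\BR^3\setminus T_n)$ is killed upon inclusion into $\BR^3\setminus T_{n+1}$, so the colimit of these fundamental groups vanishes. In particular $H_1(\BR^3\setminus W;\BR)=0$, which is certainly finite dimensional.

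The remaining step---that the Whitehead manifold $\BR^3\setminus W$ is not tame---is where I expect the real content of the argument to sit, though it too is classical. Suppose for contradiction that $\BR^3\setminus W$ were homeomorphic to the interior of a compact $3$-manifold $M$ with boundary. Then $M$, being homotopy equivalent to its contractible interior, is itself contractible; Poincar\'e--Lefschetz duality then forces $\D M\cong\BS^2$, and the Poincar\'e conjecture forces $M\cong\BD^3$, so that its interior is $\BR^3$. But $\BR^3\setminus W$ is not homeomorphic to $\BR^3$, because it is not simply connected at infinity \cite{Rolfsen}, giving the contradiction. Applying the contrapositive of Theorem \ref{sat1} concludes the proof. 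The other three hypotheses are essentially structural to the construction of $W$; the only non-routine ingredient is the failure of tameness, which rests on the classical wildness of the Whitehead manifold at infinity.
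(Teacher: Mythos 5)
Your reduction to Theorem \ref{sat1} is the right frame, and the first half of your verification is fine: simple connectivity of $\BR^3\setminus W$ via the colimit argument (the clasp kills each meridian of $\pi_1(\BR^3\setminus T_n)$ under inclusion) is correct and gives $H_1(\BR^3\setminus W;\BR)=0$, which is exactly how the paper justifies the homological hypothesis. But your proof of non-tameness contains a genuine error. The contractible space is the Whitehead \emph{manifold} $\BS^3\setminus W$; the complement in $\BR^3$ is that manifold minus the point at infinity, and deleting the point changes the homotopy type. A Mayer--Vietoris computation (using $H_3(\BS^3\setminus W)=H_2(\BS^3\setminus W)=0$) gives $H_2(\BR^3\setminus W;\BZ)\cong\BZ$, so $\BR^3\setminus W$ is simply connected but \emph{not} contractible. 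Consequently your chain ``$M$ contractible $\Rightarrow\D M\cong\BS^2\Rightarrow M\cong\BD^3\Rightarrow$ interior is $\BR^3$'' collapses at the first step: with the correct homotopy data the tame model you must exclude is $M\cong\BS^2\times[0,1]$ with interior $\BS^2\times\BR$, and your cited fact (the Whitehead manifold is not simply connected at infinity, hence $\BS^3\setminus W\not\cong\BR^3$) says nothing about this case.

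Moreover, this residual case cannot be ruled out by homotopy-level reasoning at all: the paper's own Remark points out that there \emph{is} a subset of $\BR^3$ homeomorphic to the Whitehead continuum whose complement is homeomorphic to $\BS^2\times\BR$, and that such a set \emph{is} critical \cite[Corollary 2.6]{Grayson-Pugh}. So any correct non-tameness proof must use the specific clasped embedding $W=\cap T_n$, not just abstract properties of $W$ or the homotopy type of its complement. The natural repair inside this paper is to argue as in Proposition \ref{prop-nottame} or Lemma \ref{lem-tucker}: each region $T_n\setminus \mathrm{int}\, T_{n+1}$ is a Whitehead link exterior, whose two boundary tori are incompressible and which is not a product $\D T_n\times[0,1)$; hence $\pi_1(T_1\setminus W)$ is an infinite nontrivial amalgamated product and is not finitely generated, so Lemma \ref{lem-tucker} (with $N=T_1$) shows $\BR^3\setminus W$ is not tame. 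That incompressibility argument at the $W$-end is the real content your proposal skipped.
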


Recall now that, as an abstract topological space, a {\em solenoid} is a topological space obtained as the inverse limit of circles. More concretely, if we are given any sequence $(n_i)$ of integers $n_i\ge 2$ and consider the maps
$$\mu_i:\BS^1\to\BS^1,\ \ \mu_i(z)=z^{n_i}$$
then the associated solenoid is the inverse limit 
$$X=\varprojlim(\BS^1\stackrel{\mu_1}\longleftarrow\BS^1\stackrel{\mu_2}\longleftarrow\BS^1\stackrel{\mu_3}\longleftarrow\dots)$$
All solenoids can be embedded in $\BR^3$ and, abusing terminology we will say that a closed set $X\subset\BR^3$ is a solenoid if it is abstractly homeomorphic to one. We get from Lemma \ref{lem-mladen} below that solenoid complements are not tame but have finitely generated first homology with coefficients in $\BR$. It follows thus from Theorem \ref{sat1} that solenoids are not critical:

\begin{kor}\label{kor-solenoid}
Solenoids in $\BR^3$ are not critical.\qed
\end{kor}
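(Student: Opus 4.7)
The plan is to apply Theorem \ref{sat1} directly. For any solenoid $X\subset\BR^3$ I need to verify that (i) $X$ is closed and connected, (ii) $\BR^3\setminus X$ is connected, (iii) $\dim_\BR H_1(\BR^3\setminus X;\BR)<\infty$, and (iv) $\BR^3\setminus X$ is \emph{not} tame. Once these four items are in hand, the contrapositive of Theorem \ref{sat1} yields at once that $X$ is not critical.

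Items (i) and (ii) will be formal. The solenoid $X$ embeds as the intersection of a nested sequence of solid tori $T_1\supset T_2\supset\cdots$, which makes it compact and connected, proving (i). Since $X$ has topological dimension one, Alexander duality in $\BS^3$ gives $\tilde H_0(\BS^3\setminus X;\BR)\cong\check H^2(X;\BR)=0$, so $\BS^3\setminus X$ is connected; removing the point at infinity from a connected open $3$-manifold preserves connectedness, which gives (ii). For (iii), the same Alexander duality together with the direct-limit computation $\check H^1(X;\BR)=\varinjlim(\BR\xrightarrow{\cdot n_i}\BR)\cong\BR$ shows that $H_1(\BS^3\setminus X;\BR)$ is one-dimensional, and a Mayer--Vietoris argument at the puncture transfers finite-dimensionality to $H_1(\BR^3\setminus X;\BR)$. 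The excerpt packages (iii) together with (iv) into the forthcoming Lemma \ref{lem-mladen}, which I take as given.

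The only real obstacle is (iv), the non-tameness of $\BR^3\setminus X$, and this is exactly where Lemma \ref{lem-mladen} does its work. A natural strategy is to use the nested sequence of incompressible tori $\partial T_i\subset\BR^3\setminus X$: if $\BR^3\setminus X$ were homeomorphic to the interior of a compact 3-manifold $M$ with boundary, then standard 3-manifold theory bounds the number of pairwise disjoint, pairwise non-parallel incompressible tori in $M$, so some pair $\partial T_i$, $\partial T_j$ with $i<j$ would have to cobound a product region $T^2\times[0,1]$ in $M$. This is incompatible with the wrapping condition $n_i,n_{i+1},\dots,n_{j-1}\ge 2$, since passing from $\partial T_j$ to $\partial T_i$ through a genuine product would force the inclusion to be a homotopy equivalence, whereas the solenoidal wrapping kills the meridian class by a factor of $n_i n_{i+1}\cdots n_{j-1}>1$. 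Granting Lemma \ref{lem-mladen}, Corollary \ref{kor-solenoid} follows immediately from Theorem \ref{sat1}.
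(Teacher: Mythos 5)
Your top-level deduction coincides with the paper's: Corollary \ref{kor-solenoid} is obtained by feeding Lemma \ref{lem-mladen} into the contrapositive of Theorem \ref{sat1}, and your verifications of (i)--(iii) (Alexander duality $\tilde H_0(\BS^3\setminus X;\BR)\cong\check H^2(X;\BR)=0$, $H_1(\BS^3\setminus X;\BR)\cong\check H^1(X;\BR)\cong\varinjlim(\BR\xrightarrow{\cdot n_i}\BR)\cong\BR$, plus a Mayer--Vietoris adjustment at the puncture) match the paper's discussion. Since you explicitly take Lemma \ref{lem-mladen} as given, the corollary itself is safely established.

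The gap is in your sketched argument for (iv), which is also where your route genuinely diverges from the paper's. The paper proves non-tameness purely algebraically and \emph{embedding-independently}: $\check H^1(X;\BZ)$ is an infinitely generated subgroup of $\BQ$, an invariant of the abstract space $X$, and Alexander duality gives $H_1(\BR^3\setminus X;\BZ)\simeq\check H^1(X;\BZ)$ for \emph{any} embedding; a tame complement would have the homotopy type of a compact manifold, hence finitely generated integral $H_1$. Your incompressible-tori strategy instead presupposes the standard picture of $X$ as an intersection of nested solid tori $T_i$ with wrapping numbers $n_i\ge 2$. But the corollary, as the paper's remark following it emphasizes, applies to \emph{every} subset of $\BR^3$ homeomorphic to a solenoid --- this is precisely what distinguishes the solenoid from the Whitehead continuum, which does admit an embedding with tame complement --- and for a wild embedding your defining sequence need not consist of such solid tori, nor need their boundaries be incompressible in the complement. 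Indeed, even in the standard unknotted picture the claim ``nested sequence of incompressible tori $\D T_i\subset\BR^3\setminus X$'' fails for $\D T_1$: a longitude of the unknotted $T_1$ bounds a meridian disk of the complementary solid torus of $\BS^3$, disjoint from $X$, so $\D T_1$ compresses to the outside; and one can construct embeddings with homological winding $n_i\ge 2$ whose defining tori all have unknotted cores, so that \emph{every} $\D T_i$ compresses outward and the Haken-type finiteness argument never engages. Repairing your sketch would require working inside $T_1\setminus X$ (say via Lemma \ref{lem-tucker}) and, more seriously, an argument covering arbitrary embeddings --- which is exactly what the paper's integral Čech cohomology computation delivers for free.
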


Corollary \ref{kor-solenoid} gives a positive answer to a conjecture of Grayson-Pugh \cite[p. 26]{Grayson-Pugh} asserting that the p-adic solenoid is not critical. 

\begin{bem}
We stress that Corollary \ref{kor-solenoid} applies to every subset of $\BR^3$ which is homeomorphic to a solenoid. This is is somewhat exceptional. Or more precisely, this fails to be true for the Whiteahead continuum. In fact, there is $X\subset\BR^3$ homeomorphic to the Whitehead continuum and whose complement $\BR^3\setminus X$ is homeomorphic to $\BS^2\times\BR$. This last property implies that $X$ is a critical set \cite[Corollary 2.6]{Grayson-Pugh}.
\end{bem}

Before concluding the introduction we should mention that, although all our work is topological, the question of knowing which sets are critical has deep connections with dynamics. For example, as discussed in \cite{Grayson-Pugh}, a closed set $X\subset\BR^n$ is critical if and only if it is the set of chain recurrent points of a smooth flow $(\phi_t)$ in $\BR^n$. In fact, not only do critical sets have such a dynamical interpretation, but also many of the sets of interest are of dynamical nature. For example, p-adic solenoids arise as so called Smale-Williams attractors. There are however other very prominent attractors such as the Lorenz attractor, which we suspect to actually be critical.
\medskip

\noindent{\bf Acknowledgements.} We thank Dominique Cerveau, Mladen Bestvina, and especially Pekka Pankka for very interesting conversations on the topic of this note.

\section{}
We discuss next a few facts about 3-manifolds, but first we refer to Rolfsen's book \cite{Rolfsen} for general facts and definitions. We also refer to \cite{McMillan} for examples of non-tame manifolds. Having done that, we start by recalling the following result by Tucker \cite{Tucker}:

\begin{sat*}[Tucker]
An open $\BR P^2$-irreducible 3-manifold $M$ is tame if and only if for every compact submanifold $C\subset M$ and for every connected component $U\in\pi_0(M\setminus C)$ we have that $\pi_1(U)$ is finitely generated.
\end{sat*}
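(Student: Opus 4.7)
The plan is to prove the two implications separately, with the reverse direction being the substantive one. For the forward direction, suppose $M=\text{int}(\bar M)$ for a compact manifold $\bar M$ with boundary, and let $C\subset M$ be a compact submanifold. A collaring argument produces a compact 3-submanifold $N\subset M$ with $C\subset \text{int}(N)$ such that $M\setminus \text{int}(N)\cong \partial N\times[0,\infty)$, where $\partial N\cong\partial\bar M$. Each component of $M\setminus N$ then has the form $F\times[0,\infty)$ for some component $F$ of $\partial N$, and in particular has finitely generated $\pi_1=\pi_1(F)$. A given component $U$ of $M\setminus C$ decomposes as the union of its part inside $N$, which is relatively compact in $N$ and so has finitely generated fundamental group, together with the finitely many of these end-pieces it contains; Van Kampen then yields finite generation of $\pi_1(U)$.

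For the reverse direction, I would construct inductively a well-behaved exhaustion $N_1\subset N_2\subset\cdots$ of $M$ by compact 3-submanifolds whose complementary pieces are eventually products. Starting with an arbitrary compact exhaustion $D_1\subset D_2\subset\cdots$, for each component $V$ of $M\setminus D_i$ the hypothesis supplies that $\pi_1(V)$ is finitely generated, so Scott's compact core theorem yields a compact core $K_V\subset V$ whose inclusion is a $\pi_1$-isomorphism. Adjoining these cores to $D_i$, thickening to a compact 3-submanifold, and passing to a suitable subsequence, one arranges that for every $i$ and every component $W$ of $M\setminus N_i$, the intersection $N_{i+1}\cap W$ is a compact cobordism whose inclusions of both boundary surfaces into $W$ induce $\pi_1$-isomorphisms.

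The concluding step is to recognize each such cobordism as a product. Under the standing $\BR P^2$-irreducibility hypothesis, combined with the loop theorem and Dehn's lemma to rule out compressible boundary components and to cap off or exclude sphere and $\BR P^2$ boundary components, each cobordism $N_{i+1}\cap W$ is irreducible with incompressible boundary consisting of two copies of a closed surface $F_i$, and both inclusions induce $\pi_1$-isomorphisms. A standard application of Waldhausen's rigidity theorem for Haken $3$-manifolds (in the $I$-bundle form) then forces a product structure $F_i\times[0,1]$. Gluing these products along the exhaustion identifies $M$ with the interior of a compact manifold with boundary, which is the desired tameness.

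The main obstacle is the inductive construction in the second paragraph: the compact cores produced by Scott's theorem are not canonical, and the cores chosen in nested components of $M\setminus D_i$ need not nest themselves, so a careful engulfing argument, together with successive enlargements of the $K_V$'s and passage to subsequences, is required to make the cores fit compatibly. The $\BR P^2$-irreducibility hypothesis then plays an essential role in the final step, where without it the cobordism could be a twisted $I$-bundle over a non-orientable surface or contain a two-sided $\BR P^2$, either of which would obstruct the product conclusion and hence tameness.
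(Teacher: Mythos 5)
You should first be aware that the paper does not prove this statement at all: it is Tucker's theorem, quoted verbatim from \cite{Tucker} and used as a black box, so there is no internal proof to compare yours against. Measured against the argument in the literature, your overall architecture is the historically correct one --- Tucker's proof does run through Scott's (then brand-new) compact core theorem and Stallings--Waldhausen product recognition --- and your forward direction is sound in substance, with one slip: ``relatively compact in $N$, so finitely generated fundamental group'' is not a valid inference for an open subset of a compact manifold; what you actually need is that $\bar U\cap N$ is itself a compact $3$-manifold (after smoothing corners), which holds because $C$ and $N$ are compact submanifolds.

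The genuine gap is exactly the step you flag and then defer: ``passing to a suitable subsequence, one arranges that \dots\ both boundary surfaces \dots\ induce $\pi_1$-isomorphisms.'' That is not bookkeeping to be cleaned up later; it is essentially the entire content of the theorem. Adjoining Scott cores gives at best surjectivity statements about images of fundamental groups; to get incompressible frontier surfaces and eventual $\pi_1$-isomorphisms one needs a compression/complexity argument --- compress frontier surfaces via the loop theorem, cap spheres using irreducibility, and show that the complexity of the frontier strictly drops so the process terminates, which is where the hypothesis for \emph{every} compact submanifold $C$ (not just the chosen exhaustion) gets used. Your induction also never shows that the number of noncompact complementary components stabilizes, i.e.\ that $M$ has finitely many ends. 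That this requires irreducibility early, and not only in the last step, is illustrated by $M=\BS^3\setminus K$ with $K$ a standardly embedded Cantor set: every component of the complement of every compact submanifold of $M$ has finitely generated fundamental group (such a component is $V\setminus K'$ for $V$ a component of $\BS^3\setminus C$ and $K'$ a tame Cantor set, whose deletion does not change $\pi_1$), yet $M$ has a Cantor set of ends and is not tame; $M$ is of course reducible, but your sketch invokes irreducibility only after the cobordisms are already in hand.

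Two further inaccuracies in your concluding step. Product recognition requires that $\pi_1(F)\to\pi_1(V)$ be an isomorphism for the \emph{cobordism} $V$ itself; your condition that both frontier surfaces include $\pi_1$-isomorphically into the complementary component $W$ only gives injectivity into $V$ and surjectivity of $\pi_1(V)\to\pi_1(W)$, and injectivity of the latter still has to be argued (via incompressibility of the frontier of $V$ in $W$). And your closing justification for the role of $\BR P^2$-irreducibility is off target: a compact region with \emph{two} frontier components is never a twisted $I$-bundle over a closed surface, since such a bundle has connected boundary, and in any case a manifold with a twisted-$I$-bundle end is still tame. The actual role of the hypothesis is to exclude essential spheres and projective planes, so that sphere components of frontiers can be capped off and the Haken-manifold technology (loop theorem, Waldhausen rigidity, or the $I$-bundle recognition theorem in Hempel's book) is available at all.
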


Recall that a 3-manifold is $\BR P^2$-irreducible if it does not contain embedded projective planes and if every embedded (smooth) 2-sphere bounds a ball. All the manifolds in this note will arise as complements $\BR^3\setminus X$ of closed connected subsets of $\BR^3$. In this particular situation we get the following from Tucker's theorem:

\begin{lem}\label{lem-tucker}
Suppose that $X\subset\BR^3$ is a closed connected set with connected complement. Then $\BR^3\setminus X$ is tame if and only if $\pi_1(N\setminus X)$ is finitely generated for every compact submanifold $N\subset\BR^3$ which contains $X$ in its interior.
\end{lem}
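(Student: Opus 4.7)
The strategy is to apply Tucker's theorem not to $\BR^3\setminus X$ but to the one-point compactification $M'=\BS^3\setminus X$, where $\BS^3=\BR^3\cup\{\infty\}$ (assuming $X$ is compact, which is the relevant case). This detour is essential because $\BR^3\setminus X$ itself need not be $\BR P^2$-irreducible when $X$ is compact: any 2-sphere enclosing $X$ in $\BR^3$ fails to bound a ball in the complement. By contrast, $M'$ is $\BR P^2$-irreducible, since it is orientable and any embedded 2-sphere $S\subset M'$ bounds 3-balls on both sides in $\BS^3$ by Alexander's theorem, so as the connected set $X$ lies in one of the two balls the other is a ball in $M'$. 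Moreover, $M'$ is tame iff $\BR^3\setminus X$ is tame, since the two manifolds differ only by the interior point $\infty$ (with Euclidean neighborhood), which in any compact-with-boundary realization corresponds to capping or uncapping a single sphere boundary component.

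For the forward direction, suppose $\BR^3\setminus X$ is tame and let $N$ be a compact submanifold of $\BR^3$ containing $X$ in its interior. Then $C'=\BS^3\setminus\operatorname{int}(N)$ is a compact submanifold of $M'$, and $M'\setminus C'=\operatorname{int}(N)\setminus X$. By Tucker's theorem applied to $M'$, each component of $\operatorname{int}(N)\setminus X$ has finitely generated fundamental group, and each component of $N\setminus X$ deformation retracts to a unique such component via a collar of $\partial N$.

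For the backward direction, let $C'\subset M'$ be any compact submanifold. Since the compact sets $C'$ and $X$ are disjoint in $\BR^3$, a smooth Urysohn function and a regular value produce a compact smooth submanifold $N\subset\BR^3$ with $X\subset\operatorname{int}(N)$ and $N\cap C'=\emptyset$. Setting $N'=\BS^3\setminus\operatorname{int}(N)$, one has $C'\subset\operatorname{int}(N')$, and
$$M'\setminus C'=(N\setminus X)\cup(N'\setminus C'),\qquad (N\setminus X)\cap(N'\setminus C')=\partial N.$$
The space $N'\setminus\operatorname{int}(C')$ is a compact 3-manifold with boundary, so $N'\setminus C'$ has finitely many components, each with finitely generated $\pi_1$; components of $N\setminus X$ have finitely generated $\pi_1$ by hypothesis. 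Form the bipartite graph whose vertices are the components of $N\setminus X$ and of $N'\setminus C'$, with an edge for each component of $\partial N$ joining the unique vertices containing it; the connected components of $M'\setminus C'$ then correspond to the connected components of this graph. Isolated vertices are of the form $U=A$ with $A\subset\operatorname{int}(N)\setminus X$ a component, or $U=B$ with $B\subset\operatorname{int}(N')\setminus C'$, both with finitely generated $\pi_1$ as above. A non-isolated component involves at most $\#\pi_0(\partial N)$ components of $N\setminus X$ and at most finitely many components of $N'\setminus C'$, glued along finitely many compact surfaces; a standard van Kampen argument gives finite generation of $\pi_1$. This verifies Tucker's criterion for $M'$ and proves tameness.

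The main obstacle is the van Kampen assembly in the backward direction: although $N\setminus X$ may have infinitely many components when $X$ is wild, one must use the finiteness of $\pi_0(\partial N)$, together with the fact that each component of $\partial N$ lies in a unique component on each side, to ensure that each component of $M'\setminus C'$ is assembled from only finitely many pieces.
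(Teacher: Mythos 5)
Your proof is correct, but it takes a genuinely different route from the paper's. The paper never compactifies: it fixes a defining sequence $C_1\supset C_2\supset C_3\supset\cdots$ of compact submanifolds intersecting to $X$ and confronts the failure of irreducibility that you point out by a dichotomy. If $C_i\setminus X$ contains essential spheres for infinitely many $i$, these spheres can be chosen nested (each $S_i$ disjoint from $C_{i+1}$), so that $\BS^3\setminus X$ is an increasing union of balls, whence $\BR^3\setminus X\cong\BS^2\times\BR$ is tame outright; otherwise some $C_i\setminus X$ is irreducible, Tucker's theorem is applied to $C_1\setminus X$ rather than to the full complement, with a Seifert--van Kampen argument supplying the finite generation, and tameness of $\BR^3\setminus X$ follows by gluing on the tame exterior $\BR^3\setminus C_1$. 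Your one-point compactification absorbs this entire dichotomy: Alexander's theorem for smooth spheres plus connectedness of $X$ makes $\BS^3\setminus X$ irreducible, so Tucker applies directly, and your bipartite-graph assembly---essentially the paper's Lemma \ref{lem-svk}, with the finiteness of $\pi_0(\partial N)$ as the correct pivot, since $\pi_0(N\setminus X)$ could a priori be infinite---does the rest. What each approach buys: the paper's route stays inside $\BR^3$ and exhibits the sphere-swallowing case explicitly as $\BS^2\times\BR$; yours is more uniform and shorter, at the cost of the asserted transfer ``$\BS^3\setminus X$ tame iff $\BR^3\setminus X$ tame,'' which is standard (the end at $\infty$ has neighborhoods homeomorphic to $\BS^2\times\BR$, forcing the corresponding boundary component of any missing-boundary compactification to be a sphere, which one caps or uncaps with a ball) but deserves the one-line justification of why that boundary component is a sphere; this is at the same level of rigor as the paper's own unproved gluing step. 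One minor simplification you missed: your isolated vertices on the $N\setminus X$ side are vacuous, since a component of $N\setminus X$ missing $\partial N$ would be bounded and clopen in the connected unbounded $\BR^3\setminus X$---harmless, as you handle them correctly anyway.
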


Before launching the proof note that every closed connected set $X\subset\BR^3$ with connected complement can be obtained as the intersection $X=\cap_{i=1}^\infty C_i$ of a nested sequence of compact 3-dimensional submanifolds $C_1\supset C_2\supset C_3\supset\dots$ with connected boundary. We refer to any such sequence as {\em a defining sequence} for $X$. Note that every subsequence of a defining sequence for $X$ is a defining sequence for $X$ in its own right.

\begin{proof}
If $\BR^3\setminus X$ is tame then the claim clearly holds because $N\setminus X$ is also tame for any $N$ as in the statement. To prove the other direction let $C_1\supset C_2\supset C_3\supset\dots$ be a defining sequence for $X$, and suppose for a moment that $C_i\setminus X$ fails to be irreducible for infinitely many, say for all, $i$. Since $\BR^3$ does not contain a copy of $\BR P^2$, it follows that for each $i$ the manifold $C_i\setminus X$ contains some 2-sphere $S_i$ which does not bound a ball. Note also that up to passing to a subsequence we might assume that $S_i\cap C_{i+1}=\emptyset$ for all $i$. Considering now $X$ a subset of $\BS^3=\BR^3\cup\{\infty\}$ note that the sphere $S_i$ separates $\BS^3$ into two balls $B_i,B_i'$. The set $X$ being connected and disjoint of $S_i$, we get that it is contained in one of these balls. Say that $X\subset B_i'$ for all $i$. Noting that $B_i'\subset C_i$ we see that
$$\BS^3\setminus X=\cup B_i$$
is the union of nested balls $B_1\subset B_2\subset B_3\subset\dots$ It follows that $\BS^3\setminus X$ is homeomorphic to $\BR^3$. In particular $\BR^3\setminus X$ is homeomorphic to $\BS^2\times\BR$ and thus tame.

Continuing with the proof of Lemma \ref{lem-tucker} we can can thus assume that $C_i\setminus X$ is irreducible for all $i$. Suppose that we are given a compact manifold $C\subset C_1\setminus X$ and note that there is $j$ such that $C\cap C_j=\emptyset$. By assumption the fundamental group of $C_j\setminus X$ is finitely generated. The group $\pi_1(C_1\setminus(C\cup C_j))$ is also finitely generated because it is the fundamental group of a compact manifold. It follows thus from the Seifert-van Kampen theorem that $\pi_1(C_1\setminus(X\setminus C))$ is also finitely generated. Since $C\subset C_1\setminus X$ was arbitrary, we get from Tucker's theorem that $C_1\setminus X$ is tame. Since $\BR^3\setminus C_1$ is also tame, we get thus $\BR^3\setminus X$ is tame, as we wanted to prove.
\end{proof}

We will obtain the conclusion of Theorem \ref{sat1} by applying Lemma \ref{lem-tucker}. We will derive that the involved fundamental groups are finitely generated using the following direct consequence of the Seifert-van Kampen theorem:

\begin{lem}\label{lem-svk}
Let $M$ be a connected manifold and $\Sigma\subset M$ a properly embedded codimension 1 submanifold. If $\Sigma$ has finitely many connected components and if $\pi_1(U)$ is finitely generated for every such connected component $U\in\pi_0(M\setminus\Sigma)$, then $\pi_1(M)$ is finitely generated.\qed
\end{lem}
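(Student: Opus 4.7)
The plan is to apply Seifert-van Kampen in its graph-of-groups form to an open cover of $M$ consisting of $M\setminus\Sigma$ and an open tubular neighborhood $T$ of $\Sigma$. Choose pairwise disjoint open tubular neighborhoods $T_1,\dots,T_k$ of the components $\Sigma_1,\dots,\Sigma_k$ of $\Sigma$ and set $T=\bigcup_iT_i$. Since cutting a connected manifold along a connected codimension-one submanifold increases the number of connected components by at most one, $M\setminus\Sigma$ has at most $k+1$ components $U_1,\dots,U_m$; likewise $T\setminus\Sigma$ has at most $2k$ components, namely one or two half-collars per $\Sigma_i$ according to whether $\Sigma_i$ is one- or two-sided. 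The nerve of $\{M\setminus\Sigma,T\}$ refined to components is a finite bipartite graph $\Gamma$ with vertex set $\{U_j\}\sqcup\{T_i\}$, and Seifert-van Kampen (in the form that handles disconnected intersections) presents $\pi_1(M)$ as the fundamental group of a finite graph of groups over $\Gamma$, with vertex groups $\pi_1(U_j)$ and $\pi_1(T_i)\cong\pi_1(\Sigma_i)$, and edge groups the fundamental groups of the half-collars.

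By hypothesis each $\pi_1(U_j)$ is finitely generated. The only remaining issue is that $\pi_1(\Sigma_i)$ is not assumed to be. I plan to handle this by exploiting the $I$-bundle structure of $T_i$ over $\Sigma_i$: any edge of $\Gamma$ incident to the vertex $T_i$ has an edge group that either equals $\pi_1(\Sigma_i)$ (two-sided case) or has index two inside it (one-sided case, the edge group being the orientation subgroup). Thus $\pi_1(T_i)$ is generated inside $\pi_1(M)$ by the image of one adjacent edge group plus at most one extra element encoding the monodromy of the $I$-bundle. That image, up to conjugation by the stable letter attached to the corresponding edge, lies inside the image of an adjacent $\pi_1(U_j)$. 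Consequently a finite generating set for $\pi_1(M)$ can be assembled from finite generating sets for the $\pi_1(U_j)$, the finitely many stable letters associated with edges of $\Gamma$ outside a chosen spanning tree, and at most one additional element per one-sided component $\Sigma_i$. The main obstacle is this last point, needed precisely because the hypotheses impose no finite generation on $\pi_1(\Sigma)$ itself; a naive application of Seifert-van Kampen to the cover $\{M\setminus\Sigma, T\}$ would require exactly that.
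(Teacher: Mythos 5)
Your proof is correct and follows exactly the route the paper intends: the paper offers no written proof, asserting the lemma as a direct consequence of the Seifert--van Kampen theorem, and your graph-of-groups version of van Kampen over the cover by $M\setminus\Sigma$ and tubular neighborhoods of the $\Sigma_i$ is precisely the standard fleshing-out of that assertion. You also correctly identify and dispose of the one genuine subtlety --- that $\pi_1(\Sigma_i)$ is not assumed finitely generated (and in the paper's application need not be) --- by noting that each $T_i$-vertex group is absorbed, up to at most one extra generator in the one-sided case, into an adjacent edge group and hence into the finitely generated data.
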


Continuing with our topological considerations note that descriptions of sets $X\subset\BR^3$ in terms of defining sequences are very well suited to check if Theorem \ref{sat1} applies to them or not. For instance we have:

\begin{prop}\label{prop-nottame}
Suppose that a closed connected set $X\subset\BR^3$ with connected complement admits a defining sequence $(C_i)$ such that
\begin{enumerate}
\item there is some $g$ such that $C_i$ is a handlebody of genus $g$ for all $i$,
\item the inclusion $\D C_i\to C_i$ is $\pi_1$-injective for all $i$, and
\item $C_i\setminus C_{i-1}\neq\D C_i\times[0,1)$ for all $i$.
\end{enumerate}
Then $\dim_\BR H_1(\BR^3\setminus X;\BR)\le g$ and $X$ is not tame.
\end{prop}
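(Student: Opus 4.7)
The plan is to treat the two conclusions in turn. For the homological bound I would apply Alexander duality in $\BS^3 = \BR^3 \cup \{\infty\}$ to each compact handlebody $C_i$: this gives $\tilde H_1(\BS^3 \setminus C_i; \BR) \cong \tilde H^1(C_i; \BR) \cong \BR^g$. Removing a manifold point from a 3-manifold does not change $H_1$, so $\dim_\BR H_1(\BR^3 \setminus C_i; \BR) = g$ as well. Now $\BR^3 \setminus X = \bigcup_i (\BR^3 \setminus C_i)$ is an ascending union of open sets, so singular homology commutes with the colimit, and the colimit of an ascending chain of vector subspaces each of dimension at most $g$ itself has dimension at most $g$. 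This gives $\dim_\BR H_1(\BR^3 \setminus X; \BR) \le g$.

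For the claim that $\BR^3 \setminus X$ is not tame I would argue by contradiction. Suppose it is tame; by Lemma \ref{lem-tucker} the group $\pi_1(C_1 \setminus X)$ is finitely generated. Exhaust $C_1 \setminus X$ by the compact submanifolds $W_k = \overline{C_1 \setminus C_k^\circ} = A_1 \cup \cdots \cup A_{k-1}$, where $A_i = \overline{C_i \setminus C_{i+1}^\circ}$ is the region between two consecutive handlebodies. Because a finite generating set of $\pi_1(C_1 \setminus X)$ already lies in some $W_j$, the inclusion $W_j \hookrightarrow W_k$ is $\pi_1$-surjective for every $k \ge j$. The separating surface $\partial C_k$ splits $W_{k+1}$ into $W_k$ and $A_k$, and by (2) the inclusions of $\partial C_k$ into $W_k$ and into $A_k$ are $\pi_1$-injective, so Van Kampen identifies $\pi_1(W_{k+1})$ with the amalgamated product $\pi_1(W_k) \ast_{\pi_1(\partial C_k)} \pi_1(A_k)$. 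The $\pi_1$-surjectivity of $W_k \hookrightarrow W_{k+1}$ therefore forces $\pi_1(\partial C_k) \hookrightarrow \pi_1(A_k)$ to be surjective, hence an isomorphism.

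To finish, I would invoke Waldhausen's classification (or Stallings' fibration theorem): the piece $A_k$ is a compact, irreducible 3-manifold, since any 2-sphere in $A_k$ bounds a ball in $\BR^3$ that must lie in $A_k$ (otherwise the ball would contain a boundary component, compressing it in $A_k$ and contradicting (2)); its boundary is incompressible by (2), and its fundamental group is the closed surface group $\pi_1(\partial C_k)$. Since $A_k$ has two boundary components of the same genus $g$, Waldhausen forces $A_k$ to be homeomorphic to $\partial C_k \times [0,1]$, contradicting condition (3). The main subtlety is the reading of hypothesis (2): as the boundary of a positive-genus handlebody cannot be $\pi_1$-injective in the handlebody itself, (2) must be understood as saying $\partial C_i$ is $\pi_1$-injective in each of the adjacent three-dimensional pieces $A_{i-1}$ and $A_i$, which is exactly what the Van Kampen step requires.
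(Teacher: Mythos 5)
Your proof is correct and takes essentially the same route as the paper: both arguments rest on the amalgamated-product decomposition of $\pi_1(C_1\setminus X)$ along the surfaces $\partial C_i$ (incompressible in the regions between consecutive handlebodies, which is also how the paper's condition (2) must be read, exactly as you observe) combined with Lemma \ref{lem-tucker}, plus the same direct-limit computation bounding $\dim_\BR H_1(\BR^3\setminus X;\BR)$ by $g$. Your run of the argument by contradiction, with the Stallings--Waldhausen product theorem invoked explicitly, is just the contrapositive of the paper's breezed-over claim that condition (3) forces the inclusions $\partial C_i\hookrightarrow \overline{C_i\setminus C_{i+1}}$ to be non-isomorphisms on $\pi_1$, making the infinite amalgam a strictly increasing union of subgroups and hence not finitely generated.
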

\begin{proof}
Proposition \ref{prop-nottame} is known to all experts in 3-dimensional topology - we just breeze over its proof. The third condition ensures that the inclusions of $\D C_i$ and $\D C_j$ into $C_i\setminus C_j$ are not isomorphisms on $\pi_1$ for all $i<j$. It then follows, using the second condition as well, that the fundamental group of $C_1\setminus X$ can be described as a non-trivial amagamated product with infinitely many factors:
$$\pi_1(C_1\setminus X)=\pi_1(C_1\setminus C_2)*_{\pi_1(\D C_2)}\pi_1(C_2\setminus C_3)*_{\pi_1(\D C_3)}\dots$$
As such, $\pi_1(C_1\setminus X)$ is not finitely generated. It follows thus from Lemma \ref{lem-tucker} that $\BR^3\setminus X$ is not tame.

Now, since homology commutes with direct limits we get that
$$H_1(\BR^3\setminus X;\BR)=\varinjlim H_1(\BR^3\setminus C_i;\BR).$$
Since $C_i$ is a genus $g$ handlebody for all $i$ we get that $H_1(\BR^3\setminus C_i;\BR)\simeq\BR^g$ for all $i$. This means that, $H_1(\BR^3\setminus X;\BR)$ is the direct limit of a sequence of vector spaces of dimension $g$ and hence has itself at most dimension $g$.
\end{proof}

Note that in the proof of the last claim of Proposition \ref{prop-nottame} it is indeed important that we are working over a field. Indeed, in the same situation $H_1(\BR^3\setminus X;\BZ)$ can well be infinitely generated. This happens for example if $X$ is a solenoid. In fact, the first \v{C}ech cohomology group with integral coefficients $\check H^1(X;\BZ)$ of such a solenoid $X$ is an infinitely generated abelian group contained in $\BQ$. By Alexander duality we have then that 
$$H_1(\BR^3\setminus X;\BZ)\simeq\check H^1(X;\BZ)$$
for any embedding of $X$ into $\BR^3$. Note that this implies that $\BR^3\setminus X$ can't be tame. Anyways, after referring to \cite{Conner-Meilstrup-Repovc} and the references therein for more on the topology of solenoid complements, and to  the classical book \cite{Eilenberg-Steenrod} for a discussion of \v{C}ech cohomology and Alexander duality, we record the upshot of the discussion above:

\begin{lem}\label{lem-mladen}
If $X\subset\BR^3$ is a solenoid then $H_1(\BR^3\setminus X;\BR)\simeq\BR$ and $\BR^3\setminus X$ is not tame.\qed
\end{lem}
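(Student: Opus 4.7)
The plan is to organise as a proof the two observations that are already outlined in the paragraph preceding the statement: one carrying out the computation over $\BR$ (giving $\BR$), and one over $\BZ$ (giving an infinitely generated group, hence non-tameness).

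First I would embed $X$ in $\BS^3=\BR^3\cup\{\infty\}$ and invoke Alexander duality for compact subsets to get, for $G\in\{\BR,\BZ\}$,
$$H_1(\BS^3\setminus X;G)\simeq\check H^1(X;G).$$
Since $\BS^3\setminus X$ is an open $3$-manifold, removing a single point does not change $H_1$ (apply excision to a small ball around $\infty$, which shows $H_k(\BS^3\setminus X,\BR^3\setminus X;G)=0$ for $k\le 2$), so the left hand side equals $H_1(\BR^3\setminus X;G)$.

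Second, I would compute $\check H^1(X;G)$ via the continuity of \v{C}ech cohomology for inverse limits of compact Hausdorff spaces. Writing $X=\varprojlim(\BS^1,\mu_i)$ with $\mu_i(z)=z^{n_i}$, $n_i\ge 2$, one has
$$\check H^1(X;G)\simeq\varinjlim\bigl(H^1(\BS^1;G),\mu_i^*\bigr),$$
with bonding maps $\mu_i^*$ acting as multiplication by $n_i$ on the coefficient group. Over $G=\BR$ every such map is an isomorphism, so the colimit is $\BR$; over $G=\BZ$ the colimit is the strictly increasing union $\bigcup_k \tfrac{1}{n_1\cdots n_k}\BZ\subset\BQ$, which is not finitely generated as an abelian group.

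Combining the two steps gives $H_1(\BR^3\setminus X;\BR)\simeq\BR$ and $H_1(\BR^3\setminus X;\BZ)$ infinitely generated. To conclude non-tameness, observe that the interior of a compact $3$-manifold with boundary has finitely generated integral homology, so the infinite generation rules out tameness. The only real subtlety is invoking a form of Alexander duality general enough to cover our non-CW compactum $X$; this is precisely the \v{C}ech-cohomology version in \cite{Eilenberg-Steenrod}. Identifying the bonding map in the direct limit with $\mu_i^*$ is then just naturality of \v{C}ech cohomology, and the remaining steps are routine.
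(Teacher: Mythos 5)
Your proposal is correct and takes essentially the same route as the paper, which records this lemma as the ``upshot'' of its preceding discussion: Alexander duality identifying $H_1(\BR^3\setminus X;G)$ with $\check H^1(X;G)$, with the integral group an infinitely generated subgroup of $\BQ$ (hence non-tameness) and the real group isomorphic to $\BR$. Your explicit continuity computation of $\check H^1$ over the inverse system and the excision argument for removing the point at infinity simply fill in details the paper leaves implicit.
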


\section{}
Besides the topological facts from the last section we will need below two further easy technical lemmas - we discuss both of them here. Suppose from now on that notation is as in the statement of Theorem \ref{sat1}.
\medskip

\noindent{\bf Smoothness.} The following easy lemma will play a key role in the proof of Theorem \ref{sat1}:

\begin{lem}\label{lem-constant}
The restriction of $f$ to $X$ is constant.
\end{lem}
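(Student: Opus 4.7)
The plan is to show that the image $f(X) \subset \BR$ must be a single point, which is equivalent to the conclusion. To do this I will combine two observations: one topological, coming from the connectedness of $X$, and one measure-theoretic, coming from Sard's theorem applied to $f$.

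First, since $X \subset \BR^3$ is connected by hypothesis and $f \in C^\infty(\BR^3)$ is continuous, the image $f(X)$ is a connected subset of $\BR$, hence an interval (possibly degenerate). Second, by the assumption that $X$ is the critical set of $f$, every point of $X$ satisfies $df = 0$, so each element of $f(X)$ is a critical value of $f$. In other words, $f(X)$ is contained in the set of critical values of $f$.

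Now I invoke Sard's theorem: since $f$ is smooth, the set of critical values of $f$ has Lebesgue measure zero in $\BR$. Consequently $f(X)$ is an interval of Lebesgue measure zero, and therefore reduces to a single point. This gives $f|_X \equiv \const$, which is exactly the statement of the lemma.

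There is essentially no obstacle: the argument uses only the connectedness of $X$ and Sard's theorem, and does not require the hypotheses on $\BR^3 \setminus X$ appearing elsewhere in Theorem~\ref{sat1}. The only mild point worth stressing is that one should avoid conflating ``critical point'' and ``critical value''; the argument works because $f(X)$ is both connected (forcing it to be an interval) and contained in the critical value set (forcing it to have measure zero).
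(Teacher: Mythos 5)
Your proof is correct and is essentially the paper's argument: both combine Sard's theorem (the critical values have measure zero) with the connectedness of $X$, the only cosmetic difference being that you phrase the conclusion via ``a measure-zero interval is a point'' while the paper says a measure-zero subset of $\BR$ is totally disconnected. Your remark distinguishing critical points from critical values is apt but does not change the argument.
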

\begin{proof}
The set of critical values of $f$ has measure $0$ by Sard's theorem. In particular it is totally disconnected. Since the image under $f$ of $X$ is connected and contained in the set of critical values, it is reduced to a point, as we needed to prove. 
\end{proof}

\begin{bem}
Before moving on, we comment briefly on the required smoothness for the function in Theorem \ref{sat1}. First, it suffices that $f\in C^3(\BR^3)$ because this is the differentiability assumption in Sard's theorem. Sard's theorem is only used in the proof of Lemma \ref{lem-constant}. In fact, Lemma \ref{lem-constant} does not hold with a lower degree of smoothness. Recall for instance that  Whitney \cite{Whitney} constructed a $C^1$-function on $f:\BR^2\to\BR$ for which there is a circle $S\simeq\BS^1$ such that $S\subset\{df=0\}$ while $f\vert_S$ is not constant. Note that such a circle cannot be rectifiable. In fact, if we assume that $X$ is rectifiable then, replacing in the proof of Lemma \ref{lem-constant} Sard's theorem by the fundamental theorem of calculus, we obtain the following second version of the main result of this note: {\em Let $X\subset\BR^3$ be as in Theorem \ref{sat1} and suppose moreover that it contains a dense connected rectifiable subset. If there is a function $f\in C^1(\BR^3)$ with  $X=\{x\in\BR^3\vert df_x=0\}$, then the open 3-manifold $\BR^3\setminus X$ is tame.}
\end{bem}

We will apply Morse theory to the function $f$. In fact we will need to work with manifolds with boundary, which is a bit of a pain. However, we will only need the simplest of the lemmas in Morse theory (compare with \cite[Theorem 3.1]{Milnor}). We discuss this for the sake of completeness. 
\medskip

\noindent{\bf Baby Morse theory.} Let $M$ be a compact manifold with boundary and recall that a function $F:M\to\BR$ is smooth if there is an open manifold $U$, with $M\subset U$ and with $\dim U=\dim M$ and such that our function $F$ extends to a $C^\infty$-function on $U$ which we still denote by $F$. Fix a Riemannian metric on $U$ and denote by $\DD F(x)$ the gradient of $F$ at $x$. 

\begin{lem}\label{lem-baby morse}
Suppose that $M$ is a compact manifold with boundary and $F:M\to\BR$ a smooth function. Suppose also that $t<T$ and that we have
\begin{enumerate}
\item $\DD F(x)\neq 0$ for all $x\in F^{-1}[t,T]$, and
\item $\DD F(x)$ is not orthogonal to $T_x\D M$ for all $x\in F^{-1}[t,T]\cap\D M$.
\end{enumerate}
Then the inclusion of $F^{-1}[T,\infty)$ into $F^{-1}[t,\infty)$ is a homotopy equivalence.
\end{lem}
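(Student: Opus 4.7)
The plan is to push $F^{-1}[t,\infty)$ down onto $F^{-1}[T,\infty)$ along the flow of a suitably modified gradient vector field, in the spirit of the standard Morse-theoretic argument of \cite{Milnor}. The only genuine issue compared with the closed case is that the unaltered flow of $\DD F$ could leave $M$ through the boundary, so I first modify $\DD F$ near $\D M$ so that it is tangent to $\D M$ while still moving $F$ upwards.

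Concretely, I would construct a smooth vector field $V$ on $F^{-1}[t,T]$ with the two properties that $V$ is tangent to $\D M$ at every point of $F^{-1}[t,T]\cap\D M$ and that $dF(V)>0$ everywhere on $F^{-1}[t,T]$. Away from a fixed collar of $\D M$ take $V=\DD F$, which is nonzero by hypothesis (1). At a boundary point $x\in F^{-1}[t,T]\cap\D M$, decompose $\DD F(x)=\DD F(x)^{T}+\DD F(x)^{\perp}$ with respect to $T_x\D M\subset T_xM$; hypothesis (2) says $\DD F(x)^{T}\neq0$, and a one-line computation gives
\[
dF_x(\DD F(x)^{T})=\langle \DD F(x),\DD F(x)^{T}\rangle=|\DD F(x)^{T}|^2>0.
\]
Using a partition of unity subordinate to the collar and its complement, interpolate smoothly between $\DD F$ and $\DD F^T$; since each convex combination still pairs positively with $dF$, the resulting field $V$ has the desired properties.

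Having $V$, set $W:=V/dF(V)$, so that $dF(W)\equiv 1$ on $F^{-1}[t,T]$. Multiplying by a bump function equal to $1$ on $F^{-1}[t,T]$ and vanishing outside a slightly larger set, extend $W$ by zero to a smooth, complete vector field $\tilde W$ on $M$; because $\tilde W$ is tangent to $\D M$, the induced flow $\phi_s$ preserves $M$. By construction, as long as $\phi_s(x)\in F^{-1}[t,T]$ one has $\tfrac{d}{ds}F(\phi_s(x))=1$, so every $x\in F^{-1}[t,T]$ reaches the level set $F^{-1}(T)$ in time exactly $T-F(x)$.

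Finally, define
\[
H:F^{-1}[t,\infty)\times[0,1]\to F^{-1}[t,\infty),\qquad H(x,\tau)=\phi_{\tau\max(0,\,T-F(x))}(x).
\]
This is continuous, satisfies $H_0=\mathrm{id}$, fixes $F^{-1}[T,\infty)$ at every time, and has $H_1$ taking values in $F^{-1}[T,\infty)$; so $H$ is a strong deformation retraction and the inclusion is a homotopy equivalence. The only delicate point in the whole argument is the smooth construction of $V$ across $\D M$, and this is exactly what the non-orthogonality hypothesis (2) was designed to guarantee; once $V$ is in hand, the rest is a routine flow computation.
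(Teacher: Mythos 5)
Your proof is correct and is essentially the same as the paper's: both arguments kill the normal component of $\DD F$ along $\D M$ (the paper via an auxiliary field $Z$ extending $\zeta_x\in(T_x\D M)^\perp$ with $\vert Z_x\vert<\vert\DD F(x)\vert$, you via the tangential projection and a partition of unity in a collar), then rescale so the field pairs to $1$ with $dF$, and flow $F^{-1}[t,\infty)$ up to $F^{-1}[T,\infty)$. Your explicit homotopy $H(x,\tau)=\phi_{\tau\max(0,T-F(x))}(x)$ is just an unwound form of the paper's conclusion $\phi_{s-t}(F^{-1}[t,\infty))=F^{-1}[s,\infty)$, and your glossing of how $\DD F^T$ is extended off $\D M$ matches the level of detail in the paper's construction of $Z$.
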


\begin{proof}
To begin with, choose an open extension $U$ of $M$ to which $F$ extends. Also, given $x\in\D M$, decompose the gradient of $F$ as $\DD F(x)=\mu_x+\zeta_x$ with $\mu_x\in T_x\D M$ and $\zeta_x\in(T_x\D M)^\perp$ and note that our second assumption implies that 
$$\vert\zeta_x\vert<\vert\DD F(x)\vert$$
for all $x\in\D M\cap F^{-1}[t,T]$. Now, let $Z$ be a vector field with compact support, with $Z_x=\zeta_x$ for all $x\in\D M$ and with 
\begin{equation}\label{eq-trieste}
\vert Z_x\vert<\vert\DD F(x)\vert
\end{equation} 
for all $x\in f^{-1}[t,T]$. This is possible because by the first assumption we have that $\DD F(x)\neq 0$ for all $x$ in the region we are interested in. Now, consider a vector field $X$ on $U$, with compact support and with 
$$X_x=\frac1{\langle\DD F(x)-Z_x,\DD F(x)\rangle}\left(\DD F(x)-Z_x\right)$$
for all $x\in F^{-1}[t,T]$. Note that the denominator does not vanish because of \eqref{eq-trieste}. Note also that $X$ is tangent to $\D M$ for every point in $\D M\cap F^{-1}[t,T]$. And finally, note that by construction
$$\langle X_x,\DD F(x)\rangle=1$$
for all $x\in F^{-1}[t,T]$. All of this implies, exactly as in the proof of Theorem 3.1 in \cite{Milnor}, that the flow $(\phi_s)$ associated to $X_x$ satisfies 
$$\phi_{s-t}(F^{-1}[t,\infty))=F^{-1}[s,\infty)$$
for every $s\in[0,T-t]$. The claim follows.
\end{proof}

\section{}

We are now ready to prove Theorem \ref{sat1}. Starting with the proof suppose that there is a smooth function $f:\BR^3\to\BR$ whose set of critical points is exactly $X$, that is $X=\{df=0\}$. By Lemma \ref{lem-constant} we have that $f\vert_X$ is constant. In particular, we can normalize $f$ so that it vanishes on $X$, that is $X\subset f^{-1}(0)$. 

We claim that $\BR^3\setminus X$ is tame. From Lemma \ref{lem-tucker} we get that to prove that this is the case it suffices to show that $\pi_1(N\setminus X)$ is finitely generated for every compact submanifold $N\subset\BR^3$ containing $X$ in its interior. Fix such a compact submanifold $N\subset\BR^3$. We note that, since $\D N$ is compact and disjoint of the set $X$ of critical points of $f$ we can, up to a small isotopy supported near the boundary, assume that $\D N$ is in general position with respect to the vectorfield $\DD f$. In particular, there are only finitely many points $x\in\D N$ with $\DD f\in(T_x\D N)^\perp$. Moreover we can assume, again up to a small isotopy,  that none of those points belongs to the compact $f^{-1}(0)\cap\D N$. Scaling $f$ we can thus assume that 
\begin{equation}\label{eq-trieste2}
\DD f(x)\notin(T_x\D N)^\perp\text{ for all }x\in\D N\cap f^{-1}[-1,1].
\end{equation}
Set also $N_X=N\setminus X$. 

To prove that $\pi_1(N_X)$ is finitely generated we will apply Lemma \ref{lem-svk} to the surface $\Sigma=f^{-1}(0)\cap N_X$. We need to check that the conditions in the said lemma are satisfied. We start analysing the structure of $\Sigma$:
\medskip

\noindent{\bf Claim 1.} {\em $N_X\setminus\Sigma$ has finitely many connected components.}
\begin{proof}[Proof of Claim 1.]
Note that $\D N\setminus\Sigma$ has only finitely many connected components because $\Sigma\cap\D N$ is a closed submanifold of the compact surface $\D N$. In particular, it suffices to prove that every connected component $U$ of $N_X\setminus\Sigma$ meets $\D N_X$. Suppose that this is not the case for some component $U$ and let $\bar U$ be the closure of $U$ in $\BR^3$. The boundary of $\bar U$ is contained in $\Sigma\cup X=f^{-1}(0)$. In particular, since $\bar U$ is compact, it follows that $f$ has a critical point in $U$, which contradicts the assumption that $X=\{df=0\}$.
\end{proof}

We prove next that $\Sigma$ itself has finitely many connected components:
\medskip

\noindent{\bf Claim 2.} {\em $\Sigma$ has finitely many connected components.}
\begin{proof}[Proof of Claim 2.]
By assumption $\BR^3\setminus X$ has finite dimensional first homology group, and thus also finite dimensional first cohomology. Now, from the Mayer-Vietoris theorem, and from the fact both $H^1(\D N;\BR)$ and $H^1(\BR^3\setminus X;\BR)$ are finitely dimensional, we obtain then that
\begin{equation}\label{eq-home}
\dim_\BR H^1(N_X;\BR)<\infty.
\end{equation}
Now, note that every connected component $\Sigma'$ of $\Sigma$ determines, once endowed with an orientation, a cohomology class $\omega_{\Sigma'}\in H^1(N\setminus X;\BR)$ by taking algebraic intersection numbers. In other words, $\omega_{\Sigma'}$ is the Poincar\'e dual of $\Sigma'$.

Arguing by contradiction, suppose that $\Sigma=f^{-1}(0)\cap N\setminus X$ has infinitely many connected components. It then follows from Claim 1 that we can find an infinite sequence $\Sigma_0,\Sigma_1,\Sigma_2,\dots\in\pi_0(\Sigma)$ such that there are two (possibly identical) connected components $U$ and $V$ of $N_X\setminus\Sigma$ such that all the components $\Sigma_i$ in the list above are adjacent to both $U$ and $V$. For the sake of concreteness we will assume that $U\neq V$, leaving the other case to the reader. 

We proceed now as follows:
\begin{itemize}
\item Orient each $\Sigma_i$ in such a way that a positively oriented transversal segment crosses from $U$ to $V$, 
\item choose points $*_U\in U$ and $*_V\in V$, 
\item and choose also for each $i$ an arc $\tau_i$ from $*_U$ to $*_V$ and which meets $\Sigma$ transversally in a single point belonging to $\Sigma_i$.
\end{itemize}
For each $i\ge 1$ consider the 1-chain $\alpha_i=\tau_i-\tau_0$. By construction we have
$$\omega_{\Sigma_i}(\alpha_j)=\delta_{i,j}$$
for all $i,j\ge 1$ - here $\delta$ is the Kronecker function. This proves that the cohomology classes $\omega_{\Sigma_j}$ are linearly independent, contradicting \eqref{eq-home}. We have proved Claim 2.
\end{proof}

We have checked the first condition in Lemma \ref{lem-svk}. We prove now that the second condition is also satisfied:
\medskip

\noindent{\bf Claim 3.} {\em $\pi_1(U)$ is finitely generated for every component $U$ of $N_X\setminus\Sigma$.}

\begin{proof}[Proof of Claim 3]
Note that $f$ is either positive or negative on $U$. For the sake of having a better attitude, we assume that it is positive and set $U_T=\overline{U\cap f^{-1}[T,\infty)}$ for $T>0$. Note that \eqref{eq-trieste2} implies that $U_T$ is a compact manifold with boundary for all $T\in(0,1)$. In particular, $\pi_1(U_T)$ is finitely generated. Moreover, \eqref{eq-trieste2} and Lemma \ref{lem-baby morse} imply that for all $t\in(0,T]$ the inclusion of $U_T$ into $U_t$ is a homotopy equivalence. It follows that the inclusion of $U_T$ into $\cup_{t\in(0,T)}U_t=U$ is also a homotopy equivalence. In particular, $\pi_1(U)$ is finitely generated, as we had claimed.
\end{proof}

Having proved Claim 3, we see that also the second condition in Lemma \ref{lem-svk} is satisfied. It thus follows that $\pi_1(N_X)$ is finitely generated. As we mentioned at the beginning of the proof, it follows then from Lemma \ref{lem-tucker} that $\BR^3\setminus X$ is tame. This finishes the proof of Theorem \ref{sat1}.\qed
\medskip

Note that the proof of Theorem \ref{sat1} also gives some information about what is going on in higher dimensions. More concretely, the argument we used to prove that $\pi_1(N\setminus X)$ is finitely generated remains valid in that setting. In particular, we have the following weaker version of Theorem \ref{sat1}:

\begin{sat}
Let $X\subset\BR^n$ be a closed and connected set whose complement $\BR^n\setminus X$ is connected and satisfies $\dim_\BR H_1(\BR^n\setminus X;\BR)<\infty.$ If $X$ is critical, then $\pi_1(N\setminus X)$ is finitely generated for every compact $n$-dimensional submanifold $N\subset\BR^n$ which contains $X$ in its interior.\qed
\end{sat}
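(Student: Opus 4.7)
The plan is to run the proof of Theorem \ref{sat1} verbatim, up to but not including the final appeal to Tucker's theorem—it is only at that last step that dimension $3$ is used. Every other ingredient of the argument, namely Lemma \ref{lem-constant}, Lemma \ref{lem-baby morse}, Lemma \ref{lem-svk}, and the three intermediate claims, is dimension-independent.

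First, given a smooth $f:\BR^n\to\BR$ with $X=\{df=0\}$, Sard's theorem still forces $f(X)$ to be totally disconnected, so Lemma \ref{lem-constant} applies and $f$ may be normalized so that $X\subset f^{-1}(0)$. Next, fix a compact submanifold $N\subset\BR^n$ containing $X$ in its interior. A small isotopy of $\D N$ supported near $\D N$ (hence disjoint from $X$) puts $\D N$ in general position with respect to the gradient $\DD f$, and after rescaling $f$ we can arrange
$$\DD f(x)\notin(T_x\D N)^\perp\text{ for all }x\in\D N\cap f^{-1}[-1,1].$$
Set $N_X=N\setminus X$ and $\Sigma=f^{-1}(0)\cap N_X$. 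The goal is then to apply Lemma \ref{lem-svk} to $\Sigma\subset N_X$.

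The verification of the hypotheses of Lemma \ref{lem-svk} is then identical to Claims 1, 2 and 3 in the proof of Theorem \ref{sat1}. Claim 1—every component of $N_X\setminus\Sigma$ meets $\D N_X$—uses only that, absent such a boundary point, the closure $\bar U\subset N$ would be compact with topological boundary in $f^{-1}(0)$, so $f\vert_{\bar U}$ would attain an interior critical value outside $X$, contradicting $X=\{df=0\}$. Claim 2—$\Sigma$ has finitely many components—reduces to the bound $\dim_\BR H^1(N_X;\BR)<\infty$; this still follows from Mayer--Vietoris applied to $\BR^n\setminus X=N_X\cup(\BR^n\setminus\mathrm{int}(N))$, together with the hypothesis $\dim_\BR H_1(\BR^n\setminus X;\BR)<\infty$ and the finite-dimensionality of $H^*(\D N;\BR)$. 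The oriented components of the codimension-one submanifold $\Sigma$ then produce linearly independent classes in $H^1(N_X;\BR)$ via the same algebraic-intersection construction, forcing $\pi_0(\Sigma)$ to be finite. Claim 3—$\pi_1(U)$ finitely generated for each component $U$ of $N_X\setminus\Sigma$—follows from Lemma \ref{lem-baby morse}, which is dimension-free: each $U_T=\overline{U\cap f^{-1}[T,\infty)}$ is a compact manifold with boundary (and so has finitely generated $\pi_1$), and Lemma \ref{lem-baby morse} makes the inclusion $U_T\hookrightarrow U$ a homotopy equivalence.

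There is no genuine obstacle: the sole place in the proof of Theorem \ref{sat1} where $n=3$ intervenes is the closing invocation of Tucker's theorem, which upgrades finite generation of $\pi_1(N\setminus X)$ to tameness of $\BR^3\setminus X$. In the absence of such a higher-dimensional tool, the conclusion of the present statement is exactly the intermediate output produced by the argument above.
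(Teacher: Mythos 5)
Your proposal is correct and coincides with the paper's own proof: the theorem is stated with a \qed precisely because, as the paper notes, the argument establishing finite generation of $\pi_1(N\setminus X)$ in the proof of Theorem \ref{sat1} (Lemmas \ref{lem-constant}, \ref{lem-svk}, \ref{lem-baby morse} and Claims 1--3) is dimension-independent, with Tucker's theorem being the only step special to dimension $3$. You correctly identify this and verify the dimension-free nature of each ingredient, including the still-finite set of tangencies of $\DD f$ with $\D N$ needed for the normalization \eqref{eq-trieste2}.
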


\end{document}